\documentclass[11pt]{article}

\usepackage[a4paper,margin=1in]{geometry}

\usepackage[T1]{fontenc}
\usepackage{lmodern}
\usepackage{microtype}
\usepackage{float}

\usepackage{amsmath,amssymb,amsfonts}
\usepackage{amsthm}
\newtheorem{remark}{Remark}

\usepackage{algorithmic}
\usepackage{graphicx}
\usepackage{textcomp}
\usepackage{xcolor}
\usepackage[backend=biber]{biblatex}
\usepackage{booktabs}
\usepackage{comment}
\usepackage{mathrsfs}
\usepackage{subcaption}
\usepackage{caption}
\usepackage{hyperref}
\hypersetup{
    colorlinks=true,
    linkcolor=blue,
    citecolor=blue,
    urlcolor=blue
}

\newcommand{\R}{\mathcal{R}}
\renewcommand{\S}{\mathcal{S}}
\newcommand{\A}{\mathcal{A}}
\newcommand{\T}{\mathcal{T}}

\newtheorem{theorem}{Theorem}
\newtheorem{lemma}{Lemma}
\newtheorem{corollary}{Corollary}
\newtheorem{proposition}{Proposition}

\begin{document}

\title{\bfseries Linking PageRank, Time Reversal, and Policy Evaluation}

\author{
Konstantin Avrachenkov\\
Inria Sophia Antipolis, France\\
\texttt{k.avrachenkov@inria.fr}
\and
Lorenzo Gregoris\\
Eindhoven University of Technology, The Netherlands\\
\texttt{l.gregoris@tue.nl}
\and
Nelly Litvak\\
Eindhoven University of Technology, The Netherlands\\
\texttt{n.v.litvak@tue.nl}
}

\date{}  

\maketitle

\begin{abstract}
We establish a connection between policy evaluation in Markov decision processes and PageRank in network analysis.
For a fixed policy, we show that the value function of a discounted Markov
decision process can be obtained, up to an explicit rescaling, from the PageRank
vector of a suitably defined time-reversed Markov chain.
In this correspondence, the discount factor plays the role of the teleportation
parameter, while rewards induce the restart distribution.
Beyond the irreducible case, invoking quasi-stationary distributions and Doob $h$-transforms, 
we prove a general decomposition theorem showing
that policy evaluation for arbitrary finite MDPs reduces to a collection of PageRank problems on the recurrent and transient components of the
policy-induced Markov chain.
This framework naturally extends to undiscounted MDPs with terminal states and to transition-dependent rewards.
We conclude by showing efficiency of our approach on a numerical example of a sticky random walk on large deterministic and random graphs.
\end{abstract}


\section{Introduction}

Policy evaluation is an essential computational subroutine in dynamic programming methods such as policy iteration and approximate value-function methods. 
In Markov Decision Processes (MDPs), given a fixed policy, one seeks to compute its value function, i.e., the expected cumulative (discounted) reward starting from a given state \cite{puterman2014markov}.  
At its core, policy evaluation amounts to solving a linear system of Bellman equations, or equivalently a (discounted) Poisson equation for a Markov chain induced by the policy.

Despite its apparent simplicity, policy evaluation remains a computational bottleneck in large-scale problems.
The state space may be massive, the transition matrix sparse and highly non-symmetric.
A large body of work has therefore focused on accelerating Bellman solvers through asynchronous updates, Gauss--Seidel schemes, prioritized sweeping, and residual-based heuristics \cite{bertsekas1995generic,puterman2014markov,moore1993prioritized,andre1997generalized,kolobov2012planning}.


In parallel, the problem of computing stationary distributions of Markov chains has been studied extensively in network science and numerical linear algebra, most notably in the context of PageRank \cite{brin1998anatomy}.  
PageRank interprets importance scores of nodes as the stationary distribution of a random walk with random restart, and has led to a rich variety of scalable algorithms tailored to massive graphs, including power iteration variants, coordinate updates \cite{OPIC, mcsherry, fan_chung}, Monte--Carlo methods \cite{avrachenkov2007monte}, and sophisticated residual-based solvers.  
Among these, the Red-Light-Green-Light (RLGL) algorithm has recently emerged as a highly efficient coordinate-update method for computing stationary distributions of large directed graphs, often outperforming classical approaches in practice \cite{RLGL, RLGL_VAR}.

This paper has the following contributions. First, for irreducible MDPs, we establish that policy evaluation can be reduced to computing PageRank of the time-reversed policy-induced Markov chain. 
In this correspondence, the discount factor plays the role of the restart probability (teleportation parameter in the context of PageRank), while the rewards determine the restart distribution.    
Second, we prove a general decomposition theorem showing that policy evaluation
for arbitrary finite MDPs reduces to independent PageRank problems on
recurrent and transient components, characterized respectively by stationary
and quasi-stationary distributions. 
Third, we extend our framework to undiscounted MDPs with terminal states and to transition-dependent rewards by invoking quasi-stationary distributions and Doob $h$-transforms. 
Finally, we illustrate our theoretical results by solving a sticky random walk control problem on large deterministic and random graphs.

Taken together, our results suggest that policy evaluation can be fruitfully re-examined through the lens of PageRank and stationary distribution computation, leading to both theoretical insight and possibly practical performance gains.



The remainder of the paper is organized as follows.
Section~\ref{sec:2} reviews background material on policy evaluation, Markov
chains, and PageRank.
Section~\ref{sec:3} establishes the PageRank representation of the value function
for irreducible MDPs and presents the general decomposition theorem that
reduces policy evaluation to PageRank subproblems.
Section~\ref{sec:experiments} presents numerical experiments for the sticky random walk control problem.
Section~\ref{sec:conclusion} concludes the paper.

\section{Preliminaries}
\label{sec:2}

\subsection{Markov Chains and Stationary Distributions}

A finite, discrete-time Markov chain on the state space $\mathcal{S}$ is specified
by a row-stochastic matrix $P\in\mathbb{R}^{|\mathcal{S}|\times|\mathcal{S}|}$,
where $P(s,s')$ denotes the probability of transitioning from state $s$ to state
$s'$. 
If $P$ is irreducible, by the Perron–Frobenius theorem, there exists a unique stationary distribution
$\mu\in\mathbb{R}^{|\mathcal{S}|}$ satisfying
$$
\mu = \mu P,
\qquad
\sum_{s\in\mathcal{S}} \mu(s) = 1,
\qquad
\mu(s) > 0 \quad \forall s\in\mathcal{S}.
$$

Let $D := \mathrm{diag}(\mu)$.
The \emph{time reversal} of the Markov chain is defined as the chain with
transition matrix
\begin{equation}
\label{eq:time_reversal}
P^* := D^{-1} P^\top D .
\end{equation}
The matrix $P^*$ is row-stochastic, irreducible, and admits $\mu$ as its unique
stationary distribution.
Moreover, $P$ and $P^*$ share the same spectrum.

Time reversal plays a central role in what follows, as it allows us to convert
right-eigenvector equations associated with value functions into left-eigenvector
equations defining stationary distributions.

\subsection{Policy Evaluation}

We consider a Markov decision process (MDP) with finite state space $\mathcal{S}$,
finite action space $\mathscr{A}$, transition probabilities
$p(s' \mid s,a)$, and reward function $r(s,a)$, following standard notation \cite{puterman2014markov,barto2021reinforcement}.  
A (stationary, deterministic) policy $\pi$ is a mapping
$\pi:\mathcal{S}\to\mathscr{A}$.
Fixing a policy $\pi$ reduces the MDP to a Markov chain on $\mathcal{S}$ with
transition matrix
$$
P_\pi(s,s') := p(s' \mid s, \pi(s)),
$$
and reward vector
$$
r_\pi(s) := r(s,\pi(s)).
$$
In this work we are interested in MDPs with expected discounted rewards.
The central object of interest in \emph{policy evaluation} \cite{puterman2014markov} is the value function
\begin{equation}
\label{eq:vf_def}
v_\pi(s)
\;:=\;
\mathbb{E}\left[
\sum_{t=0}^{\infty} \gamma^t r_\pi(X_t)
\,\middle|\,
X_0=s
\right],
\end{equation}
representing the expected discounted return obtained when starting from state
$s$ and following policy $\pi$.
In vector form, $v_\pi\in\mathbb{R}^{|\mathcal{S}|}$ is the unique solution of the
Bellman equation
\begin{equation}
\label{eq:bellman}
v_\pi \;=\; \gamma P_\pi v_\pi + r_\pi .
\end{equation}
For the discounted case $(\gamma < 1)$,
without loss of generality, we may assume that rewards are strictly positive.
If this is not the case, one may add a constant $c$ to all rewards, which shifts
the value function by $c/(1-\gamma)$ and does not affect policy comparisons or
optimality.

Equation~\eqref{eq:bellman} is a linear system that can be written equivalently as
a discounted Poisson equation
\begin{equation}
\label{eq:poisson}
(I - \gamma P_\pi) v_\pi = r_\pi .
\end{equation}
Since $P_\pi$ is row-stochastic and $\gamma<1$, the matrix
$I-\gamma P_\pi$ is invertible, and the solution admits the convergent
Neumann-series representation
\[
v_\pi
\;=\;
(I-\gamma P_\pi)^{-1} r_\pi
\;=\;
\sum_{t=0}^{\infty} \gamma^t P_\pi^t r_\pi .
\]
This representation makes explicit the interpretation of $v_\pi$ as the
accumulated discounted expected reward along trajectories of the Markov chain induced by
$\pi$.

Computing or approximating the solution of \eqref{eq:bellman} is the basic task
of policy evaluation and constitutes a fundamental subroutine in dynamic
programming, policy iteration, and approximate dynamic programming methods. 

The definition \eqref{eq:vf_def} also extends naturally to the undiscounted case
($\gamma=1$), provided the Markov chain induced by $\pi$ contains
\emph{terminal (absorbing) states}.
Let $\A\subset\mathcal{S}$ denote the set of terminal states, and assume
that for all $s\in \A$,
$$
P_\pi(s,s)=1,
\qquad
r_\pi(s)=0.
$$
We further assume that, starting from any initial state $s\in\mathcal{S}$, the
chain reaches $\A$ almost surely in finite time.
Under this assumption, Equation~\eqref{eq:bellman} is well defined even for $\gamma=1$.
Unlike the discounted case, the matrix $I-P_\pi$ is singular, since
$P_\pi$ has eigenvalue $1$ associated with the absorbing states.
However, when the rewards are non-negative and terminal states are reached almost
surely, the system admits a unique solution satisfying $v_\pi(s)=0$ for all
$\A$ (which can be viewed as stopping the process once it reaches a terminal state).
In this case, $v_\pi(s)$ can be interpreted as the expected total reward accrued
before absorption. Note that unlike the discounted case, we cannot shift the rewards by a constant, i.e. adding a constant to the rewards can change the optimal policy non-trivially \cite{ng1999policy}.


\subsection{PageRank}

PageRank, originally introduced in \cite{brin1998anatomy}, is the stationary
distribution of a Markov chain obtained by augmenting a base transition matrix
with random restarts.
Let $P$ be a row-stochastic matrix on $\mathcal{S}$.
Fix a teleportation parameter $c\in[0,1]$ and a restart distribution
$u\in\mathbb{R}^{|\mathcal{S}|}$. 

In the original formulation of PageRank $u$ is the uniform distribution. Another common choice in the literature is $u = \delta_s$, i.e., the process restarts at a fixed state $s$, commonly known as \emph{personalized} PageRank \cite{jeh2003scaling}.

The PageRank vector $w$ is defined as the unique probability vector satisfying
the stationary equation
\begin{equation}
\label{eq:pagerank}
w = c\, w P + (1-c)\, u .
\end{equation}
Equivalently, $w$ is the stationary distribution of the Markov chain that, at
each step,
\begin{itemize}
    \item follows $P$ with probability $c$;
    \item restarts from a state sampled according to $u$ with probability
    $1-c$.
\end{itemize}

If $c<1$ PageRank is well defined and unique. Like for the value function, the PageRank equation admits a Neumann-series representation
$$
w = (1-c)\;u (I-c P)^{-1} =(1-c)\;u\sum_{t=0}^\infty c^t P^t.
$$
This representation shows that the PageRank of a state can be viewed as a weighted sum over all paths leading to that state, with weights that decay exponentially with path length.

\section{Policy Evaluation as PageRank Problem}
\label{sec:3}

In this section we establish a correspondence between discounted policy
evaluation and PageRank.
We begin with the case where the transition matrix $P_\pi$ induced by policy $\pi$ is irreducible, and show that the value function can be obtained from PageRank computation on a suitably defined time-reversed
Markov chain.
We then show that this representation is consistent with the undiscounted--reward limit. Finally we extend this result to general reducible Markov decision
processes.

\subsection{Irreducible discounted case}

We first present a concise algebraic proof of the correspondence between PageRank and the value function, and then give a
probabilistic derivation that clarifies its interpretation.

\begin{theorem}
\label{thm:value_pagerank}
Assume that the transition matrix $P_\pi$ induced by a fixed policy $\pi$ is irreducible, and let $\mu$ be its unique stationary distribution.
Let $v_\pi$ denote the value function with discount factor $\gamma\in[0,1)$ and reward vector $r$ with non-negative entries.
Define the restart distribution
$$
u(s) = \frac{r(s)\mu(s)}{\langle r \rangle_\mu},
\qquad
\langle r\rangle_\mu := \sum_{x\in\mathcal S} r(x)\mu(x).
$$
Let $w$ be the PageRank vector associated with the transition matrix $P^*_\pi$ as defined in \eqref{eq:time_reversal}, teleportation parameter $\gamma$, and restart distribution $u$.

Then the value function satisfies
\begin{equation}
\label{eq:vf_pr}
v_\pi(s)
=
\frac{\langle r \rangle_\mu}{1-\gamma}\,
\frac{w(s)}{\mu(s)},
\qquad \forall s\in\mathcal S .
\end{equation}
\end{theorem}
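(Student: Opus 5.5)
The plan is a direct algebraic verification: transpose the PageRank equation for $P^*_\pi$ and use $D=\mathrm{diag}(\mu)$ to turn it into the Bellman equation \eqref{eq:bellman}. First a degenerate case. If $r\equiv 0$, then $\langle r\rangle_\mu=0$ and $u$ is undefined. In that case I would note that $v_\pi=0$ and interpret the statement as trivially true, or simply exclude it. From now on assume $\langle r\rangle_\mu>0$; this holds as soon as $r\neq 0$, because $\mu(s)>0$ for every $s$ by irreducibility.

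Write $u = \frac{1}{\langle r\rangle_\mu} r^\top D$ as a row vector. By \eqref{eq:pagerank}, with $c=\gamma<1$, the PageRank vector is the unique solution of
\[
w = \gamma\, w P^*_\pi + (1-\gamma)\,u, \qquad P^*_\pi = D^{-1}P_\pi^\top D .
\]
Uniqueness follows from the Neumann series of Section~\ref{sec:2}, since $P^*_\pi$ is row-stochastic. I will work with the candidate column vector
\[
x := \frac{\langle r\rangle_\mu}{1-\gamma}\, D^{-1} w^\top ,
\]
and the goal is to show that $x$ solves \eqref{eq:bellman}. Then $x=v_\pi$ by uniqueness of the solution of \eqref{eq:poisson}, which holds because $I-\gamma P_\pi$ is invertible.

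The key computation comes in two steps.

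\begin{itemize}
\item Transpose the PageRank equation:
\[
w^\top = \gamma\, D P_\pi D^{-1} w^\top + \frac{1-\gamma}{\langle r\rangle_\mu} D r .
\]
Here I used $(P^*_\pi)^\top = D P_\pi D^{-1}$, since $D$ is diagonal.
\item Multiply on the left by $\frac{\langle r\rangle_\mu}{1-\gamma} D^{-1}$. This gives $x = \gamma P_\pi x + r$, which is exactly \eqref{eq:bellman}.
\end{itemize}

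Reading off coordinates gives \eqref{eq:vf_pr}. As a consistency check, $w$ should be a probability vector. Multiplying $v_\pi=\gamma P_\pi v_\pi + r$ on the left by $\mu$ and using $\mu P_\pi=\mu$ gives $\langle v_\pi\rangle_\mu = \langle r\rangle_\mu/(1-\gamma)$. It follows that $\sum_s w(s) = \frac{1-\gamma}{\langle r\rangle_\mu}\sum_s \mu(s)v_\pi(s)=1$, as required.

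There is no serious obstacle here. The only points needing care are that $w$ is a row vector, so the transpose must be applied correctly, and the $r\equiv 0$ degeneracy. Optionally, to give the probabilistic interpretation the section promises, I would expand the Neumann series $w=(1-\gamma)u\sum_t\gamma^t(P^*_\pi)^t$. I would then use $\mu(s')(P^*_\pi)^t(s',s)=\mu(s)P_\pi^t(s,s')$, which follows by induction from the definition of $P^*_\pi$. This gives $w(s)=\frac{(1-\gamma)\mu(s)}{\langle r\rangle_\mu}\sum_t\gamma^t (P_\pi^t r)(s)$, which matches the Neumann series for $v_\pi$ term by term.
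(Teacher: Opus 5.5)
Your proof is correct and is essentially the paper's algebraic proof run in the opposite direction. The paper starts from the Bellman equation, transposes, and right-multiplies by $D=\mathrm{diag}(\mu)$ to show that $\frac{1-\gamma}{\langle r\rangle_\mu}v_\pi^\top D$ satisfies the PageRank equation for $P^*_\pi$; you instead start from the PageRank equation, recover the Bellman equation for $\frac{\langle r\rangle_\mu}{1-\gamma}D^{-1}w^\top$, and invoke uniqueness of the Bellman solution, which also makes explicit the uniqueness step and the $r\equiv 0$ case that the paper leaves implicit (your optional Neumann-series argument matches the paper's probabilistic proof).
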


\begin{proof}
For brevity, we omit the policy subscript and write $P$ and $v$ instead of $P_\pi$ and $v_\pi$.
The value function satisfies the Bellman equation
$$
v = r + \gamma P v .
$$
Taking transposes and right-multiplying by $D := \mathrm{diag}(\mu)$ yields
$$
v^\top D=\gamma v^\top P^\top D + r^\top D.
$$
Using the definition of the time-reversed chain $P^* = D^{-1} P^\top D$, this can be rewritten as
$$
v^\top D = \gamma v^\top D P^* + r^\top D .
$$
Multiplying both sides by $(1-\gamma)/\langle r\rangle_\mu$ gives
\begin{align}
\frac{1-\gamma}{\langle r\rangle_\mu} v^\top D
=
\gamma \frac{1-\gamma}{\langle r\rangle_\mu} v^\top D P^*
+
(1-\gamma)\frac{r^\top D}{\langle r\rangle_\mu} .
\label{eq:big}
\end{align}
Define
\begin{equation}
\label{eq:v-to-w}
w := \frac{1-\gamma}{\langle r\rangle_\mu} v^\top D .
\end{equation}
Since $(r^\top D)/\langle r\rangle_\mu = u$, then Equation~\eqref{eq:big} becomes
$$
w = \gamma w P^* + (1-\gamma)u,
$$
which is precisely the PageRank equation. 
\end{proof}

\begin{remark}
We note that the proportionality constant $\frac{\langle r \rangle_\mu}{1-\gamma}$ in Equation~\eqref{eq:vf_pr} is the expected discounted reward when the chain is initialized at stationarity.
\end{remark}


Theorem~\ref{thm:value_pagerank} shows that discounted policy evaluation on an
irreducible Markov chain is equivalent to computing the PageRank vector of the
time-reversed chain.
In this correspondence, the discount factor plays the role of the teleportation
parameter, while the reward function determines the restart distribution.

The PageRank representation of Theorem~\ref{thm:value_pagerank} is consistent
with how MDP with discounted reward is related to MDP with time average reward as $\gamma \to 1^-$.

\begin{corollary}[\cite{puterman2014markov}, Corollary~8.2.5]
\label{lem:undiscounted_limit}
Let $P_\pi$ be irreducible with stationary distribution $\mu$, and let $r$ be a
fixed reward vector. Then
$$
\lim_{\gamma \to 1^-} (1-\gamma)\, v_\pi(s)
=
\langle r \rangle_\mu ,
\qquad \forall s \in \mathcal S .
$$
\end{corollary}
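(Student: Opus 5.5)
The plan is to derive the corollary from Theorem~\ref{thm:value_pagerank}. By \eqref{eq:vf_pr},
$$
(1-\gamma)v_\pi(s) = \langle r\rangle_\mu \,\frac{w_\gamma(s)}{\mu(s)},
$$
where $w_\gamma$ is the PageRank vector of $P^*_\pi$ with teleportation parameter $\gamma$ and restart distribution $u$. It therefore suffices to show that $w_\gamma \to \mu$ as $\gamma\to 1^-$. The theorem assumes non-negative rewards. For a general $r$, I first shift the rewards by a constant $c$ so that they become positive. This changes $(1-\gamma)v_\pi$ by exactly $c$ and changes $\langle r\rangle_\mu$ by exactly $c$, so the claimed limit is unaffected. (If $\langle r\rangle_\mu=0$ with $r\ge 0$, then $r\equiv 0$ and the claim is trivial.)

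To prove $w_\gamma\to\mu$, I use that $P^*$ is irreducible with unique stationary distribution $\mu$, as noted after \eqref{eq:time_reversal}. The vectors $w_\gamma$ are probability vectors, so they lie in a compact set. Take any sequence $\gamma_n\to1$ along which $w_{\gamma_n}\to \bar w$. Passing to the limit in
$$
w_\gamma=\gamma w_\gamma P^*+(1-\gamma)u
$$
gives $\bar w=\bar w P^*$. Since $\bar w$ is also a probability vector, uniqueness of the stationary distribution forces $\bar w=\mu$. Every subsequential limit is therefore $\mu$, and hence $w_\gamma\to\mu$.

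With this, $(1-\gamma)v_\pi(s)\to\langle r\rangle_\mu\,\mu(s)/\mu(s)=\langle r\rangle_\mu$ for every $s$, using $\mu(s)>0$.

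The only delicate point is justifying convergence of $w_\gamma$ without an explicit rate, and the compactness and uniqueness argument handles this. A more quantitative route is also available. Write $(I-\gamma P)^{-1}=\frac{1}{1-\gamma}\mathbf 1\mu + H_\gamma$, where $H_\gamma$ stays bounded as $\gamma\to1$ and converges to the deviation matrix. This boundedness follows because on the complement of the Perron eigenspace the spectrum of $P$ avoids $1$. Applying this decomposition to $r$ directly gives $(1-\gamma)v_\pi=\langle r\rangle_\mu\mathbf 1+(1-\gamma)H_\gamma r$, which makes the $O(1-\gamma)$ error explicit. I would mention this as an alternative, but I would rely on the PageRank argument since it connects the corollary to Theorem~\ref{thm:value_pagerank}.
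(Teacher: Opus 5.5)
Your proposal is correct and follows the same route as the paper, which derives the corollary from Theorem~\ref{thm:value_pagerank} by noting that the PageRank vector $w$ tends to $\mu$ as $\gamma\to1^-$. You add two points the paper leaves implicit: you justify $w_\gamma\to\mu$ via compactness and uniqueness of the stationary distribution of $P^*$, and you remove the non-negativity assumption on $r$ by shifting the rewards by a constant.
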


Corollary~\ref{lem:undiscounted_limit} is usually proved via Laurent expansion of the resolvent. It follows directly from Theorem~\ref{thm:value_pagerank}, since $w(s) \to \mu(s)$ when $\gamma \to 1^-$, i.e., PageRank with no teleport is simply the stationary distribution.

\medskip

We next give a probabilistic proof of Theorem~\ref{thm:value_pagerank}, which clarifies
the roles of time reversal and of the stationary distribution. This is essentially a per-component proof.

\begin{proof}[Probabilistic Proof of Theorem~\ref{thm:value_pagerank}]

Recall that the PageRank vector $w$ admits
the following path-wise representation. Let $J$ be a geometric random variable
with parameter $1-\gamma$, and let
$(X_t)_{t\ge0}$ be a Markov chain with transition matrix $P^*$ and initial
distribution $X_0\sim u$. Denote by $\mathbb{P}^*(\cdot)$ the law of this time-reversed chain. Then
$$
w(s) = \mathbb{P}^*(X_J=s)
= (1-\gamma)\sum_{t\ge0}\gamma^t\,\mathbb{P}^*(X_t=s).
$$

We now express this quantity in terms of the forward chain $P$. By the
definition of time reversal and using the telescoping product, we have for all $x,s\in\mathcal S$ and $t\ge0$,
\begin{align}
\label{eq:telescopic}
\mathbb{P}^*(X_t=s\mid X_0=x)
=
\frac{\mu(s)}{\mu(x)}\,\mathbb{P}(X_t=x\mid X_0=s),
\end{align}
where the probabilities on the right-hand side refer to the forward chain
$P_\pi$.

Using Equation~\eqref{eq:telescopic} and the definition of $u$,
\begin{align*}
w(s)
&=
(1-\gamma)\sum_{t\ge 0}\gamma^t
\sum_{x\in\mathcal S}
u(x)\,\mathbb{P}^*(X_t=s| X_0=x)
\\
&=
(1-\gamma)\sum_{t\ge 0}\gamma^t
\sum_{x\in\mathcal S}
\frac{r(x)\mu(x)}{\langle r\rangle_\mu}
\frac{\mu(s)}{\mu(x)}
\mathbb{P}(X_t=x| X_0=s)
\\
&=
\frac{\mu(s) (1-\gamma)}{\langle r\rangle_\mu}
\sum_{t\ge 0}\gamma^t
\sum_{x\in\mathcal S}
r(x)\,\mathbb{P}(X_t=x| X_0=s)
\\
&=
\frac{\mu(s) (1-\gamma)}{\langle r\rangle_\mu}
\sum_{t\ge 0}\gamma^t
\mathbb{E}(r(X_t)| X_0=s)
\\
&=
\frac{\mu(s)(1-\gamma)}{\langle r\rangle_\mu}\, v(s).
\end{align*}
\end{proof}

The need for time reversal and the appearance of the stationary distribution manifest in the second equality. 

\subsection{Duality}

The correspondence established in Theorem~\ref{thm:value_pagerank} can be understood
as a duality between two linear operators naturally associated with a fixed
policy. This duality arises from the canonical pairing between functions and
measures, and from the fact that $P$ and its time reversal $P^*$ are adjoints of each other; see, e.g., \cite[Section~1.6]{levin2017markov}.

Let $L^2(\S,\mu)$ denote the space of real-valued functions on $\S$ equipped with
the inner product
$$
\langle f,g\rangle_\mu := \sum_{s\in\S} f(s)g(s)\mu(s),
$$
and let $\mathcal M(\S, \mu)$ denote the space of signed measures on $\S$ absolutely continuous w.r.t. $\mu$. These
spaces are canonically paired via
$$
\langle f,\nu\rangle := \sum_{s\in\S} f(s)\nu(s).
$$
With respect to this pairing, the time-reversed kernel
$P^* = D^{-1}P^\top D$ is the adjoint of $P$, in the sense that
$$
\langle P f,\nu\rangle = \langle f,\nu P^*\rangle, \qquad \forall f \in L^2(\S,\mu), \quad \forall \nu \in \mathcal M(\S, \mu).
$$

\medskip


Let $\gamma \in [0,1)$. We define the \emph{forward resolvent} operator $R_\gamma : L^2(\S,\mu) \to L^2(\S,\mu)$,
$$
R_\gamma f \mapsto (I-\gamma P)^{-1}f,
$$
and the \emph{backward resolvent} operator $R^*_\gamma : \mathcal M(\S, \mu) \to \mathcal M(\S, \mu)$
$$
R^*_\gamma\nu \mapsto \nu(I-\gamma P^*)^{-1}. \qquad 
$$
Applying the forward operator to a reward function $r$ yields the value function
$v_\pi = R_\gamma r$. Applying the backward operator to a restart measure
$u$ yields the PageRank vector $w = (1-\gamma)R^*_\gamma u$.

\begin{proposition}
The forward and backward resolvent operators are adjoint
in $L^2(\mathcal S,\mu)$:
$$
\langle R_\gamma f, g\rangle_\mu
=
\langle f, R_\gamma^* g\rangle_\mu,
\qquad \forall f,g \in L^2(\mathcal S,\mu).
$$
\end{proposition}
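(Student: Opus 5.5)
The plan is to reduce the claim to the one-step adjointness of $P$ and $P^*$ in $L^2(\mathcal S,\mu)$, and then pass to the resolvents through the Neumann series. First I would fix how $R^*_\gamma$ acts on a function $g\in L^2(\mathcal S,\mu)$. The statement pairs two functions, so I read $R^*_\gamma g$ as the function $(I-\gamma P^*)^{-1}g$, with $P^*$ acting on column vectors.

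The basic identity is
$$
D P^* = D D^{-1}P^\top D = P^\top D ,
$$
which follows directly from the definition \eqref{eq:time_reversal}. In inner-product form it reads
$$
\langle Pf,g\rangle_\mu = f^\top P^\top D g = f^\top D P^* g = \langle f,P^*g\rangle_\mu
$$
for all $f,g$. This is the statement that $P^*$ is the adjoint of $P$ in $L^2(\mathcal S,\mu)$. Iterating gives $D(P^*)^t = (P^t)^\top D$, so that $\langle P^t f,g\rangle_\mu=\langle f,(P^*)^t g\rangle_\mu$ for every $t\ge0$.

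Next I would pass to resolvents. Both $P$ and $P^*$ are row-stochastic, so each has spectral radius $1$. Since $\gamma<1$, the series $\sum_t\gamma^tP^t$ and $\sum_t\gamma^t(P^*)^t$ converge absolutely to $(I-\gamma P)^{-1}$ and $(I-\gamma P^*)^{-1}$; this is the same Neumann expansion used for $v_\pi$ and for $w$. The space is finite-dimensional, so summing the identities $\langle P^t f,g\rangle_\mu=\langle f,(P^*)^tg\rangle_\mu$ with weights $\gamma^t$ and exchanging sum and inner product is immediate. This yields $\langle R_\gamma f,g\rangle_\mu=\langle f,R^*_\gamma g\rangle_\mu$.

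Alternatively, the matrix identity alone suffices. From $D(I-\gamma P^*)=(I-\gamma P)^\top D$, inverting both sides gives
$$
D(I-\gamma P^*)^{-1} = \big((I-\gamma P)^{-1}\big)^\top D .
$$
Sandwiching this between $f^\top$ and $g$ gives the claim at once, with no series needed.

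The main obstacle is the bookkeeping between the function picture and the measure picture, not any estimate. In the text, $R^*_\gamma$ is defined on measures by right multiplication. Identifying a measure $\nu$ with its density $g=\nu D^{-1}$ (so $\nu=g^\top D$) turns $\nu(I-\gamma P^*)^{-1}$ into the density $(I-\gamma P)^{-1}g$ rather than $(I-\gamma P^*)^{-1}g$, because $g^\top D P^* = g^\top P^\top D$. I would therefore state explicitly that in the proposition $R^*_\gamma$ acts on functions via $(I-\gamma P^*)^{-1}$, and I would verify the transposes carefully at that point. Once that convention is fixed, the proof reduces to the one-line identity $DP^*=P^\top D$.
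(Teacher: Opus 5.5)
Your proposal is correct and follows the paper's own argument: the paper also uses that $P^*$ is the $L^2(\mathcal S,\mu)$-adjoint of $P$ and that the adjoint of the inverse is the inverse of the adjoint, which is exactly your identity $D(I-\gamma P^*)^{-1}=\bigl((I-\gamma P)^{-1}\bigr)^\top D$ (your Neumann-series route is an equivalent variant). Your caveat that $R^*_\gamma$ must be read here as $(I-\gamma P^*)^{-1}$ acting on functions is correct and a useful clarification of the paper's notation: the measure-side operator transported through $\varphi$ is just $R_\gamma$, so the other reading would turn the claim into self-adjointness of $R_\gamma$, which is false for non-reversible $P$.
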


\begin{proof}
Since $P^*$ is the adjoint of $P$ in $L^2(\mathcal S,\mu)$, we have
$(I-\gamma P)^* = I-\gamma P^*$, and the adjoint of the inverse is the inverse of the adjoint.
\end{proof}

The stationary distribution $\mu$ induces a canonical identification between
functions and measures absolutely continuous with respect to $\mu$. Define the
map
$$
\varphi : L^2(\mathcal S,\mu) \to \mathcal M(\S, \mu),
\qquad
(\varphi f)(s) := f(s)\mu(s).
$$
This realizes the Riesz identification of $L^2(\mathcal S,\mu)$ with its dual; see,
e.g., \cite[Chapter~4]{rudin1991functional}.

\begin{theorem}
\label{thm:resolvent_conjugacy}
For all $f \in L^2(\mathcal S,\mu)$,
$$
R_\gamma^*\bigl(\varphi f\bigr)
=
\varphi\bigl(R_\gamma f\bigr).
$$
Equivalently, the following diagram commutes:
$$
\begin{array}{ccc}
L^2(\mathcal S,\mu) & \xrightarrow{\; R_\gamma\;} & L^2(\mathcal S,\mu) \\
\downarrow \scriptstyle{\varphi} &  & \downarrow \scriptstyle{\varphi} \\
\mathcal M(\S, \mu) & \xrightarrow{\; R_\gamma^*\;} & \mathcal M(\S, \mu)
\end{array}
$$
\end{theorem}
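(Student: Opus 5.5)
The plan is to verify the identity $R_\gamma^*(\varphi f)=\varphi(R_\gamma f)$ directly in matrix form, working with row vectors for measures as the paper does. In matrix notation $\varphi f = f^\top D$ with $D=\mathrm{diag}(\mu)$, $R_\gamma f=(I-\gamma P)^{-1}f$, and $R^*_\gamma \nu = \nu (I-\gamma P^*)^{-1}$. The claim therefore reads
\[
f^\top D\,(I-\gamma P^*)^{-1} \;=\; \bigl((I-\gamma P)^{-1}f\bigr)^\top D \;=\; f^\top (I-\gamma P^\top)^{-1} D .
\]
Since $f$ is arbitrary, it suffices to prove the matrix identity $D(I-\gamma P^*)^{-1}=(I-\gamma P^\top)^{-1}D$.

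The key step is a similarity relation. From the definition \eqref{eq:time_reversal} we have $P^*=D^{-1}P^\top D$, hence
\[
I-\gamma P^* = D^{-1}(I-\gamma P^\top)D .
\]
The matrix $I-\gamma P^\top$ is invertible because $I-\gamma P$ is invertible for $\gamma<1$. Inverting the similarity gives $(I-\gamma P^*)^{-1}=D^{-1}(I-\gamma P^\top)^{-1}D$, and left-multiplying by $D$ yields exactly the required identity. The diagram then commutes by definition.

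As a cross-check, one can argue termwise via Neumann series. From $(P^*)^t=D^{-1}(P^\top)^tD$ we get $f^\top D (P^*)^t = (P^t f)^\top D$ for every $t$, and summing with weights $\gamma^t$ gives the same result. Equivalently, the statement is just the proof of Theorem~\ref{thm:value_pagerank} with $r$ replaced by a general $f$.

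There is no real obstacle. The only points needing care are bookkeeping: keeping the row/column conventions consistent, and justifying invertibility (or convergence of the Neumann series) when $\gamma<1$. Positivity of $\mu$ under irreducibility is what makes $D^{-1}$ well defined.
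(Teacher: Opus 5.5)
Your proof is correct, but it is organized differently from the paper's. The paper argues weakly. It pairs the measure $R_\gamma^*(\varphi f)$ with an arbitrary test function $g$, moves the resolvent across using the $L^2(\mathcal S,\mu)$-adjointness of the preceding Proposition, identifies the result with $\langle \varphi(R_\gamma f), g\rangle$, and concludes by taking $g=\delta_s$. You instead establish the operator identity $D(I-\gamma P^*)^{-1}=(I-\gamma P^\top)^{-1}D$ outright from the similarity $I-\gamma P^*=D^{-1}(I-\gamma P^\top)D$.

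Both arguments rest on the single fact $DP^*=P^\top D$, which is exactly the $\mu$-adjointness of $P$ and $P^*$, so the difference is one of packaging. The paper's version is coordinate-free and is the one that would transfer to general state spaces through the Riesz identification. Yours is shorter, needs no separate adjointness lemma, and keeps one convention throughout (measures as row vectors acted on from the right).

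That last point matters here. The paper's chain implicitly lets $R_\gamma^*$ act on functions from the left. Its middle term as printed, $\langle f, R_\gamma g\rangle_\mu$, should read $\langle f, R_\gamma^* g\rangle_\mu$; without the star it agrees with the outer expressions, in general, only for reversible $P$. Your matrix computation avoids this slip.

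Your Neumann-series cross-check, $(P^*)^t=D^{-1}(P^\top)^tD$, is precisely the matrix form of the telescoping identity \eqref{eq:telescopic} used in the paper's probabilistic proof of Theorem~\ref{thm:value_pagerank}.
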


\begin{proof}
Let $f,g \in L^2(\mathcal S,\mu)$. Using the adjointness of the resolvents,
$$
\langle R_\gamma^*(\varphi f), g\rangle
=
\langle f,  R_\gamma g\rangle_\mu
=
\langle \varphi(R_\gamma f), g\rangle .
$$
Since this equality holds for all functions $g$, in particular for $\delta_s$ for all $s\in \S$, the result follows.
\end{proof}

\subsection{From irreducible to general MDPs}

It is interesting to go beyond the irreducibility assumption of Theorem~\ref{thm:value_pagerank}, because, in general, the Markov chain induced by a fixed policy may decompose into transient and recurrent communicating classes.
Nevertheless, we will show how the PageRank interpretation remains valid at the level of each class. The main difficulty arises in the treatment of transient classes, for which time reversal is not applicable. 


Let $\mathcal S$ be the state space and decompose it as
\[
\mathcal S = \T_1 \cup \cdots \cup \T_m \;\cup\; \R_1 \cup \cdots \cup \R_\ell ,
\]
where $\T_j$ are transient classes and $\R_k$ are recurrent classes.
After a suitable reordering of states, the transition matrix $P_\pi$ has the
canonical block upper–triangular form
\begin{equation}
\label{eq:matrix}
P_\pi =
\begin{pmatrix}
Q^{(1)} &   P_{\T_1\T_2}    &   \cdots     & P_{\T_1 \R_1} & \cdots & P_{\T_1 \R_\ell} \\
        & \ddots &        & \vdots      &        & \vdots        \\
        &        & Q^{(m)}& P_{\T_m \R_1} & \cdots & P_{\T_m \R_\ell} \\
\hline
0       & \cdots & 0      & P^{(1)}     &        & 0             \\
\vdots  &        & \vdots &             & \ddots &               \\
0       & \cdots & 0      & 0           &        & P^{(\ell)}
\end{pmatrix}.
\end{equation}
Here each $Q^{(j)}$ is a substochastic irreducible matrix on $\T_j$, and each
$P^{(k)}$ is a stochastic irreducible matrix on $\R_k$.

It will be convenient to consider the restriction of the value and reward vectors on each class
$$
v^{(k)} := 
\begin{cases}
    v\mid_{\T_k} \quad \text{ for } k=1,\dots,m \\ 
    v\mid_{\R_k} \quad \text{ for } k=m+1,\dots,m+\ell \\ 
\end{cases}
$$
and similarly for $r^{(k)}$.

\begin{remark}
\label{rem:decomposition}
The Bellman equation $(I-\gamma P_\pi)v = r_\pi$ decomposes along the communicating
classes of $P_\pi$.
The restriction of $v$ to each recurrent class $\R_k$ depends only on
$P^{(k)}$ and $r^{(k)}$, while the restriction to each transient class $\T_j$
depends on $Q^{(j)}$, $r^{(j)}$, and on the values on downstream classes $v^{(j+1)}, \dots, v^{(m+\ell)}$.
This follows directly from the block structure \eqref{eq:matrix}.
\end{remark}

The following theorem shows that, in each recurrent class, the value function
is given by a PageRank problem as in
Theorem~\ref{thm:value_pagerank}, while in each transient class it can be expressed
as a PageRank vector associated with a time-reversed Doob transform and a
quasi-stationary distribution. 
\begin{theorem}[General PageRank Representation of Policy Evaluation]
\label{thm:decomposition_pagerank}
Let $\pi$ be a fixed policy and let $P_\pi$ be the induced transition matrix on
the state space $\mathcal S$.
Decompose $\mathcal S$ into transient and recurrent communicating classes like in \eqref{eq:matrix}.

Then the value function $v_\pi$ can be computed by solving the following PageRank
problems on each communicating class as follows:
\begin{enumerate}
\item For each recurrent class $\R_k$, let $P^{(k)}$ be the restriction of $P_\pi$
to $\R_k$, and let $\mu^{(k)}$ be its stationary distribution.
The restriction $v^{(k)}$ is given by
$$
v^{(k)}(s) = \frac{\langle r^{(k)}\rangle_{\mu^{(k)}}}{1-\gamma}\frac{w^{(k)}(s)}{\mu^{(k)}(s)}, \qquad s \in \R_k,
$$
where $w^{(k)}$ is the PageRank vector of the time-reversed chain $(P^{(k)})^*$
with teleportation parameter $\gamma$ and restart distribution $u^{(k)}(s) = r^{(k)}(s) \mu^{(k)}(s) / \langle r^{(k)}\rangle_{\mu^{(k)}}$.

\item For each transient class $\T_j$, let $Q^{(j)}$ be the restriction of $P_\pi$
to $\T_j$, and let $\nu^{(j)}$ such that $\nu^{(j)} Q^{(j)} = \lambda^{(j)} \nu^{(j)}$ be its quasi-stationary
distribution.
The restriction $v^{(j)}$ is given by
$$
v^{(j)} = \frac{\langle \tilde r^{j}\rangle_{\nu^{(j)}}}{1-\gamma\lambda^{(j)}}\frac{w^{(j)}(s)}{\nu^{(j)}(s)}, \qquad s \in \T_j,
$$
where $w^{(j)}$ is the PageRank vector of the time-reversed Doob transform
$(\widetilde Q^{(j)})^*$ with teleportation parameter $\gamma \lambda^{(j)}$, and restart distribution 
\begin{align*}
    u^{(j)}(s) &= \frac{\tilde r^{(j)}(s)\nu^{(j)}(s)}{\langle \tilde r^{(j)} \rangle_{\nu^{(j)}}}, \\ \tilde r^{(j)} &:= r^{(j)} + \sum_{k=j+1}^{m} P_{\T_j\T_k} v^{(k)} + \sum_{k=1}^{\ell} P_{\T_j\R_k} v^{(k)}.
\end{align*}
\end{enumerate}

\end{theorem}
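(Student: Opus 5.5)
The plan is to reduce everything to Theorem~\ref{thm:value_pagerank} using the block structure \eqref{eq:matrix} and Remark~\ref{rem:decomposition}. We process the classes in reverse topological order: first all recurrent classes, then $\T_m,\T_{m-1},\dots,\T_1$. At each stage the values on downstream classes are already known and can be treated as data.

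\emph{Recurrent classes.} Since the rows of $P_\pi$ indexed by $\R_k$ vanish outside the block $P^{(k)}$, the Bellman equation restricted to $\R_k$ reads $v^{(k)} = r^{(k)} + \gamma P^{(k)} v^{(k)}$. Here $P^{(k)}$ is stochastic and irreducible, so item~1 is literally Theorem~\ref{thm:value_pagerank} applied to the chain $P^{(k)}$ with reward $r^{(k)}$. There is nothing further to prove.

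\emph{Transient classes.} Restricting the Bellman equation to $\T_j$ gives
\[
v^{(j)} = \gamma Q^{(j)} v^{(j)} + \tilde r^{(j)},
\]
where $\tilde r^{(j)}$ collects $r^{(j)}$ and the known downstream contributions. Note that these contributions carry a factor $\gamma$; I will absorb this factor into the off-diagonal blocks, i.e.\ read $\tilde r^{(j)} = r^{(j)} + \gamma\sum_k P_{\T_j\cdot}v^{(k)}$, and point this out explicitly. With non-negative rewards, all downstream values are non-negative, so $\tilde r^{(j)}\ge 0$. Next I would use Perron--Frobenius for the irreducible substochastic matrix $Q=Q^{(j)}$. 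It gives a Perron root $\lambda=\lambda^{(j)}\in(0,1]$, a positive left eigenvector $\nu$ (the quasi-stationary distribution) and a positive right eigenvector $h$ with $Qh=\lambda h$. Set $H=\mathrm{diag}(h)$ and define the Doob transform
\[
\widetilde Q := \lambda^{-1}H^{-1}QH .
\]
This matrix is stochastic and irreducible, and its stationary distribution is $\tilde\mu = \nu H/\langle \nu,h\rangle$. Substituting $v^{(j)} = H g$ turns the equation into
\[
g = (\gamma\lambda)\,\widetilde Q\, g + H^{-1}\tilde r^{(j)},
\]
which is a discounted Bellman equation for $\widetilde Q$ with discount $\gamma\lambda<1$ and reward $H^{-1}\tilde r^{(j)}$. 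Applying Theorem~\ref{thm:value_pagerank} to this equation gives $g$ in terms of the PageRank $w$ of $\widetilde Q^{*}$ (reversal with respect to $\tilde\mu$), with teleportation parameter $\gamma\lambda$.

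The last step is bookkeeping that makes the normalisation by $h$ cancel. Write $c=\langle\nu,h\rangle$. Then
\[
(H^{-1}\tilde r)(s)\,\tilde\mu(s) = \frac{\tilde r(s)\,\nu(s)}{c},
\qquad
\langle H^{-1}\tilde r\rangle_{\tilde\mu} = \frac{\langle\tilde r\rangle_\nu}{c}.
\]
Hence the restart distribution is exactly $u^{(j)}\propto \tilde r^{(j)}\nu^{(j)}$, as stated. Moreover,
\[
v^{(j)}(s) = h(s)\,g(s) = h(s)\,\frac{\langle\tilde r\rangle_\nu/c}{1-\gamma\lambda}\,\frac{w(s)}{\nu(s)h(s)/c} = \frac{\langle\tilde r\rangle_\nu}{1-\gamma\lambda}\,\frac{w(s)}{\nu(s)},
\]
which is the claimed formula.

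I expect the main obstacle to be this transient step. One must choose the correct Doob transform and check that the pair $(\tilde\mu, H^{-1}\tilde r)$ reproduces precisely the $\nu$-based quantities in the statement. One must also handle the degenerate case $\langle \tilde r\rangle_\nu=0$: then $\tilde r^{(j)}\equiv0$, hence $v^{(j)}=0$, and this is treated separately, as is implicitly done in Theorem~\ref{thm:value_pagerank}.
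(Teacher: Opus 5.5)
Your proposal is correct, and on the transient classes it takes a different route from the paper. The recurrent classes are handled the same way in both.

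For a transient class, the paper never changes variables. After writing the block equation as $(I-\gamma Q^{(j)})v^{(j)}=\tilde r^{(j)}$, it proves Lemma~\ref{lemma:trans} by transposing and right-multiplying by $D=\mathrm{diag}(\nu)$. The key identity is $Q^\top D=\lambda D\,\widetilde Q^*$, with $\widetilde Q^*=\lambda^{-1}D^{-1}Q^\top D$ from \eqref{eq:Qstar}. The right eigenvector $h$ appears only to justify calling $\widetilde Q^*$ the reversal of a stochastic chain, and then drops out.

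You instead conjugate by $H$. The transient equation then becomes a genuine discounted Bellman equation for the ergodic chain $\widetilde Q$, with discount $\gamma\lambda$ and reward $H^{-1}\tilde r^{(j)}$. You apply Theorem~\ref{thm:value_pagerank} to it as a black box and check that $h$ and $c=\langle\nu,h\rangle$ cancel.

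What each route buys:
\begin{itemize}
\item Your version explains why the teleportation parameter is $\gamma\lambda$. It also inherits every hypothesis (stochasticity, irreducibility, discount below one, non-negative reward) from the irreducible theorem, at the cost of the extra bookkeeping.
\item The paper's version is a two-line computation. It makes explicit that only $\nu$ and $\lambda$ are needed, so $h$ never has to be computed.
\end{itemize}

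In two places you are more careful than the paper:
\begin{itemize}
\item The factor $\gamma$ you insert into $\tilde r^{(j)}$ is genuinely required. Restricting $v=\gamma P_\pi v+r_\pi$ to $\T_j$ gives $r^{(j)}+\gamma\sum_k P_{\T_j\cdot}v^{(k)}$, but both the statement and \eqref{eq:Q_eqs} omit the $\gamma$. As printed, they are wrong whenever $\gamma<1$ and some downstream value is non-zero.
\item You treat the case $\langle\tilde r^{(j)}\rangle_{\nu^{(j)}}=0$ separately. There the restart distribution is undefined, and the paper passes over it.
\end{itemize}
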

\begin{proof}
Since Theorem~\ref{thm:value_pagerank} applies directly to irreducible chains,
the value function on each recurrent class $\R_\ell$ can be computed independently
by restricting to the submatrix $P^{(\ell)}$.
Hence, it remains to study the value function on the transient classes.

From the structure of matrix \eqref{eq:matrix}, we see that, assuming the transient classes are sorted topologically, having computed the value function $v^{(k)}$ depends only on the value function of the downstream classes $v^{(k+1)},\dots, v^{(m+\ell)}$. We can include these dependency term on the right of Equation~\eqref{eq:bellman} as a new reward: 
\[
(I-\gamma Q^{(j)})v^{(j)} =
\]
\begin{equation}
\label{eq:Q_eqs}
r^{(j)} + \sum_{k=j+1}^{m} P_{\T_j\T_k} v^{(k)} + \sum_{k=1}^{\ell} P_{\T_j\R_k} v^{(k)} =: \tilde r^{(j)}.
\end{equation}

We are left to show that we can write Equation~\eqref{eq:Q_eqs} as a PageRank equation. Since $Q$ is substochastic, by using Lemma~\ref{lemma:trans} below concludes the proof.
\end{proof}

To state and prove Lemma~\ref{lemma:trans} we need to overcome the main conceptual difficulty that $Q$ is substochastic and therefore does not admit a stationary distribution and time reversal.
The appropriate invariant object is the \emph{quasi-stationary distribution} \cite{seneta1966quasi}.
By Perron--Frobenius theory, since $Q$ is irreducible there exist strictly positive
left and right eigenvectors $\nu$ and $h$ such that
$$
Q h = \lambda h,
\qquad
\nu Q = \lambda \nu,
$$
where $\lambda = \rho(Q) \in (0,1)$, the largest eigenvalue of $Q$. We normalize them so that $\nu h = 1$.
The vector $\nu$ represents the limiting distribution of the chain conditioned on
non-absorption.

Define the Doob transform \cite{Doob1957} \cite[Section~17.6]{levin2017markov} of $Q$ by
\begin{equation}
\label{eq:doob}
\widetilde Q := \frac{1}{\lambda} H^{-1} Q H,
\end{equation}
where $H := \mathrm{diag}(h)$.
Then $\widetilde Q$ is a stochastic and irreducible transition matrix.
Its stationary distribution is
$$
\widetilde\nu(s) := \nu(s) h(s), \qquad s \in \T.
$$

The Doob transform therefore turns the transient dynamics into an ergodic Markov
chain, allowing us to define a meaningful time reversal.
Let $\widetilde D := \mathrm{diag}(\widetilde\nu)$.
We define the time-reversed transition matrix by
\begin{align}
\widetilde Q^*
&:= \widetilde D^{-1} \widetilde Q^\top \widetilde D = \frac{1}{\lambda} D^{-1} Q^\top D,
\label{eq:Qstar}
\end{align}
where $D := \mathrm{diag}(\nu)$.
The matrix $\widetilde Q^*$ is stochastic and has stationary distribution $\widetilde\nu$.
Conceptually, $\widetilde Q^*$ describes the reverse dynamics of the original chain
conditioned on non-absorption. Note that Equation~\eqref{eq:Qstar} does not depend on $h$.

\begin{lemma}
\label{lemma:trans}
Let $Q$ be a substochastic matrix, let $\nu$ be its quasi-stationary distribution, and let
$\lambda = \rho(Q)$. Let $b$ be a non-negative vector, $\gamma \in [0,1]$. Then the unique solution of
$$
(I-\gamma Q)x=b
$$
is given by
$$
x(s)=\frac{\langle b\rangle_\nu}{1-\gamma\lambda}\,\frac{w(s)}{\nu(s)}, \qquad \forall s \in \S,
$$
where $w$ is the PageRank vector of the time-reversed chain $\widetilde Q^*$ defined in
\eqref{eq:Qstar}, with teleportation parameter $\gamma\lambda$ and restart distribution
$$
u(s)=\frac{b(s)\nu(s)}{\langle b\rangle_\nu}.
$$
\end{lemma}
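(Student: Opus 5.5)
I will mimic the algebraic proof of Theorem~\ref{thm:value_pagerank}, with the quasi-stationary distribution $\nu$ playing the role of $\mu$ and $\lambda\widetilde Q^*$ playing the role of $P^*$.

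The first step is existence and uniqueness. Since $Q$ is irreducible and substochastic with at least one row summing to strictly less than one (it is a transient class), we have $\lambda=\rho(Q)<1$. Hence for $\gamma\in[0,1]$ we get $\rho(\gamma Q)\le\lambda<1$. So $I-\gamma Q$ is invertible and $x=\sum_t\gamma^tQ^tb$ is the unique solution; it is non-negative because $b\ge0$. This is the one point where the case $\gamma=1$ needs care, and it is exactly why the lemma allows $\gamma=1$: the hypothesis $\lambda<1$ is what makes it work. I also assume $\langle b\rangle_\nu>0$, which holds when $b\neq0$ since $\nu>0$. The case $b=0$ is trivial, with $x=0$.

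Next I transpose the equation $x=\gamma Qx+b$ and right-multiply by $D=\mathrm{diag}(\nu)$:
$$x^\top D=\gamma x^\top Q^\top D+b^\top D.$$
By \eqref{eq:Qstar} we have $Q^\top D=\lambda D\widetilde Q^*$, so
$$x^\top D=\gamma\lambda\, x^\top D\,\widetilde Q^*+b^\top D.$$
I then set
$$w:=\frac{1-\gamma\lambda}{\langle b\rangle_\nu}\,x^\top D.$$
Multiplying through by $(1-\gamma\lambda)/\langle b\rangle_\nu$ gives $w=\gamma\lambda\, w\widetilde Q^*+(1-\gamma\lambda)u$, with $u=b^\top D/\langle b\rangle_\nu$ a probability vector. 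This is the PageRank equation \eqref{eq:pagerank} with teleportation $c=\gamma\lambda<1$. Solving for $x$ gives the claimed formula.

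It remains to check that $w$ is indeed \emph{the} PageRank vector. For this I need $\widetilde Q^*$ to be row-stochastic. I will verify this directly: $(\widetilde Q^*\mathbf 1)(s)=\frac{1}{\lambda\nu(s)}\sum_{s'}Q(s',s)\nu(s')=\frac{(\nu Q)(s)}{\lambda\nu(s)}=1$. Then $c<1$ gives uniqueness of the solution of \eqref{eq:pagerank}. That solution is automatically a probability vector: summing the equation and using stochasticity gives $\sum w=c\sum w+(1-c)$, so $\sum w=1$. Nonnegativity follows from the Neumann series. Uniqueness therefore identifies our $w$ with the PageRank vector.

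I expect the main obstacle to be bookkeeping rather than substance. One issue is the factor $\lambda$ hidden in $\widetilde Q^*$, which is what shifts the teleportation parameter from $\gamma$ to $\gamma\lambda$. The other is confirming $\lambda<1$, which is needed for $\gamma=1$.
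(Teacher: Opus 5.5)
Your proposal is correct and is essentially the paper's proof: transpose $(I-\gamma Q)x=b$, right-multiply by $D=\mathrm{diag}(\nu)$, use $Q^\top D=\lambda D\widetilde Q^*$, and rescale by $(1-\gamma\lambda)/\langle b\rangle_\nu$ to obtain the PageRank equation with teleportation parameter $\gamma\lambda$. The additional checks you supply are left implicit in the paper and only strengthen the argument: $\lambda<1$ so that $\gamma=1$ is admissible, row-stochasticity of $\widetilde Q^*$, uniqueness of the PageRank solution so that $w$ is \emph{the} PageRank vector, and the degenerate case $b=0$ (dividing by $\lambda$ in $\widetilde Q^*$ also tacitly uses $\lambda>0$, which the paper's standing assumption $\lambda\in(0,1)$ provides).
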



\begin{proof}
As in the proof of Theorem~\ref{thm:value_pagerank}, we take the transpose and right-multiply by $D = \text{diag}(\nu)$
$$
x^\top D (I - \gamma \lambda \widetilde Q^*) = b^\top D.
$$
Multiplying both sides by $(1-\gamma\lambda) / \langle b\rangle_\nu$, we see that 
$w := (1-\gamma\lambda)x^\top D / \langle b\rangle_\nu$ coincides with the PageRank vector of $\widetilde Q^*$ with teleportation parameter $\gamma\lambda$ and restart distribution $u$.
\end{proof}

\begin{remark}
   We note that the teleportation parameter is $\gamma\lambda$. Thus the effect of transience is to increase the teleport probability. 
\end{remark}

\subsection{Undiscounted MDPs}

Lemma~\ref{lemma:trans} extends naturally to undiscounted MDPs with
terminal (absorbing) states.
Let
$$
\T := \{\text{transient states of } P_\pi\},
$$
$$
\A := \{\text{absorbing states of } P_\pi\}.
$$
After reordering the states, the transition matrix
can be written as
$$
P_\pi =
\begin{pmatrix}
Q_\pi & P_{\T\A} \\
0 & I_{\A\A}
\end{pmatrix},
$$
where $Q_\pi$ is a substochastic matrix describing transitions among transient states
and $I_{\A\A}$ is the identity matrix on $\A$.
For ease of exposition, we assume that have only one transient class, i.e., $Q_\pi$ is irreducible. 

\begin{theorem}
\label{thm:abs}
Consider an undiscounted MDP with terminal states and non-negative reward vector $r$ on $T$, and $r(s)=0$ for all $s\in \A$.
Let $Q_\pi$ be an irreducible transition matrix on the transient states, and let
$\nu$ be its quasi-stationary distribution, $\lambda = \rho(Q_\pi)$.
Let $w$ be the PageRank vector of the time reversal $\widetilde Q^*$ (as defined in \eqref{eq:Qstar}) with teleportation
parameter $\lambda$ and restart distribution $u(s) = r(s)\nu(s) / \langle r_T\rangle_\nu$.
Then
$$
v_\pi(s) = \frac{\langle r_T\rangle_\nu}{1-\lambda}\frac{w(s)}{\nu(s)}, \quad \forall s \in \T,
\qquad
v_\pi(s) = 0, \quad \forall s \in \A .
$$
\end{theorem}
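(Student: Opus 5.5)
The plan is to reduce Theorem~\ref{thm:abs} to Lemma~\ref{lemma:trans} with $\gamma=1$, after justifying that the undiscounted Bellman equation restricted to $\T$ is a nonsingular linear system.

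First, the absorbing block. Since $P_\pi(s,s)=1$ and $r(s)=0$ for $s\in\A$, the convention of stopping at terminal states gives $v_\pi(s)=0$ on $\A$. This is also the unique solution there that keeps the expected total reward finite.

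Next, the transient block. Using the block form of $P_\pi$, the Bellman equation with $\gamma=1$ restricted to $\T$ reads
\[
v_\T = r_\T + Q_\pi v_\T + P_{\T\A} v_\A = r_\T + Q_\pi v_\T ,
\]
that is, $(I-Q_\pi)v_\T = r_\T$. The key step is to check that this system has a unique solution, which requires $\lambda=\rho(Q_\pi)<1$. I would argue this as follows.
\begin{itemize}
\item Absorption in $\A$ occurs almost surely from every state.
\item Since the state space is finite, absorption then happens with positive probability within some fixed number of steps $N$ from every state.
\item Hence $\max_s (Q_\pi^N\mathbf 1)(s)<1$, so $\rho(Q_\pi)^N\le \|Q_\pi^N\|_\infty<1$.
\end{itemize}
Consequently $I-Q_\pi$ is invertible, and $v_\T=\sum_{t\ge0}Q_\pi^t r_\T$ agrees with the probabilistic definition of the expected total reward before absorption. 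The same bound gives $\lambda\in(0,1)$. Positivity of $\lambda$ follows because $Q_\pi$ is irreducible and nonzero; the degenerate case of a single state with no self-loop would make $Q_\pi=0$, and I would treat it separately or exclude it. Irreducibility also supplies, via Perron--Frobenius, the positive vectors $\nu$ and $h$ needed to define $\widetilde Q^*$ in \eqref{eq:Qstar}.

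Finally, I apply Lemma~\ref{lemma:trans} with $Q=Q_\pi$, $b=r_\T\ge 0$ and $\gamma=1$; the lemma explicitly allows $\gamma\in[0,1]$. Its conclusion is exactly
\[
v_\pi(s)=\frac{\langle r_\T\rangle_\nu}{1-\lambda}\frac{w(s)}{\nu(s)},
\]
where $w$ is the PageRank vector of $\widetilde Q^*$ with teleportation parameter $\lambda$ and restart distribution $u$. This PageRank vector is well defined and unique because the teleportation parameter satisfies $\lambda<1$. If $r_\T\equiv 0$, then $u$ is undefined, but in that case $v_\T=0$ trivially, so I would treat it separately.

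I expect the main obstacle to be justifying $\rho(Q_\pi)<1$ from almost-sure absorption, because the Neumann series and the teleportation constraint $c<1$ both depend on it. Everything else is a direct specialization of Lemma~\ref{lemma:trans}.
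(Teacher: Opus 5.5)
Your proposal is correct and follows the paper's route: the paper proves Theorem~\ref{thm:abs} by setting $\gamma=1$ in Lemma~\ref{lemma:trans} and noting that the teleportation parameter $\lambda<1$ keeps the PageRank problem well defined. You also supply details the paper only asserts, namely the reduction to $(I-Q_\pi)v_{\T}=r_{\T}$, the bound $\rho(Q_\pi)^N\le\|Q_\pi^N\|_\infty<1$ from almost-sure absorption, and the degenerate cases; one small slip is that $v_\pi=0$ on $\A$ follows from the definition because $r=0$ there, not from a uniqueness argument, since the Bellman equation on $\A$ reduces to $v_{\A}=v_{\A}$.
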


\begin{proof}
Assuming zero reward on $\A$ and setting $\gamma=1$, the teleportation parameter of
the associated PageRank problem becomes $\lambda<1$, so the formulation remains
well defined and Lemma ~\ref{lemma:trans} applies.
\end{proof}


\section{Numerical Experiments}
\label{sec:experiments}

Theorem~\ref{thm:value_pagerank} shows that policy evaluation can be reduced to two stationary distribution computations. As a consequence, algorithms designed for stationary distribution can be used as Bellman solvers. In particular, when the induced Markov chain is reversible, the time reversal is trivial and only a single PageRank computation is required.

To illustrate this correspondence and its algorithmic implications, we consider a reversible controlled random walk model where policy evaluation reduces exactly to PageRank, and compare PageRank solvers with standard policy evaluation algorithms.

\paragraph{Model}
Let $G=(V,E)$ be a simple, undirected, connected graph. The policy is parametrized by a \emph{stickiness} vector $\alpha=(\alpha_x)_{x\in V}$: a walker at node $x$ stays at $x$ with probability $\alpha_x$, and jumps to a uniformly chosen neighbour with probability $1-\alpha_x$. The resulting Markov chain is reversible, with stationary distribution proportional to the node degree $d_x$, i.e., $\mu(x) \propto d_x / (1-\alpha_x)$. Define the instantaneous reward as
$$
r_x = \beta_x - c(\alpha_x),
$$
where $\beta_x$ is a state reward and $c(\alpha_x)=\kappa \alpha_x/(1-\alpha_x)$ with $\kappa > 0$ is the action cost, enforcing $\alpha_x<1$ to preserve irreducibility. We consider two types of rewards: (i) uniform random rewards $\beta_x \overset{\mathrm{iid}}{\sim} \mathrm{Unif}[0,1]$, and (ii) distance to target based rewards $\beta_x = 1/(d(x,x^*)+1)$, where $d(\cdot,\cdot)$ denotes the graph distance and $x^*$ is a target node.

\paragraph{Algorithms}
We compare the standard policy evaluation algorithms, {\sc Gauss-Seidel} and {\sc Prioritized sweeping}, with the recent {\sc RLGL} PageRank solver.
\begin{itemize}
  \item {\sc RLGL}: the Red--Light--Green--Light coordinate method \cite{RLGL} with the {\sc MaxC} heuristic (maximum residual) and the {\sc GSD} heuristic from \cite{RLGL_VAR};
  \item {\sc Gauss--Seidel:} single-state sequential Gauss--Seidel updates on the Bellman linear system \cite[Chapter~6]{puterman2014markov};
  \item {\sc Prioritized sweeping:} residual-driven updates that always select the state with the largest absolute Bellman residual \cite{andre1997generalized}.
\end{itemize}

\paragraph{Metrics and implementation}
We report the $\ell_1$ Bellman residual $\|(I-\gamma P)v - r\|_1$, wall-clock runtime (milliseconds), and the \emph{normalized iteration} (cumulative number of graph edges processed divided by the total number of links). All methods use the same stopping tolerance ($10^{-10}$ on the residual), uniform initialization, and results are averaged over $20$ independent runs. Experiments are run with $\gamma = 0.9$, $\kappa=0.1$ on preferential-attachment graphs (PA), Erdős–Rényi (ER) \cite{REMCO} and 2D grid with $n=10^5$ nodes. For ER graphs we use i.i.d.\ random rewards. For PA graphs and 2D grid we use distance to target based rewards, with the target chosen as a maximum-degree node for the former, and a central node for the latter. For all cases the starting stickiness vector is sampled as $\alpha_x\overset{\mathrm{iid}}{\sim}\mathrm{Unif}[0.1,0.9]$.

\paragraph{Results}
Figures~\ref{fig:pa_value_normiter}--\ref{fig:er_res_time} show how {\sc RLGL-GSD} (in purple) consistently reduces the Bellman residual faster in terms of normalized iterations. Overall, these experiments provide a practical validation of Theorem~\ref{thm:value_pagerank}, showing that PageRank coordinate solvers can be effective Bellman solvers in this reversible settings.


\begin{figure}[p]
  \centering
  \begin{subfigure}[t]{0.48\linewidth}
    \centering
    \includegraphics[width=\linewidth]{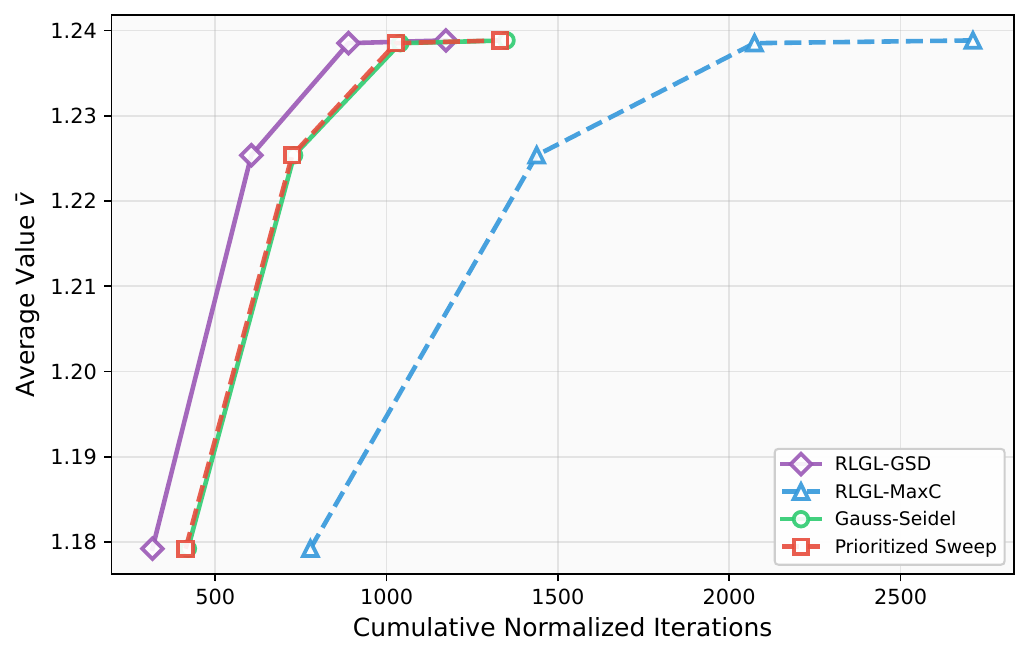}
    \caption{PA -- average value vs normalized iterations.}
    \label{fig:pa_value_normiter}
  \end{subfigure}\hfill
  \begin{subfigure}[t]{0.48\linewidth}
    \centering
    \includegraphics[width=\linewidth]{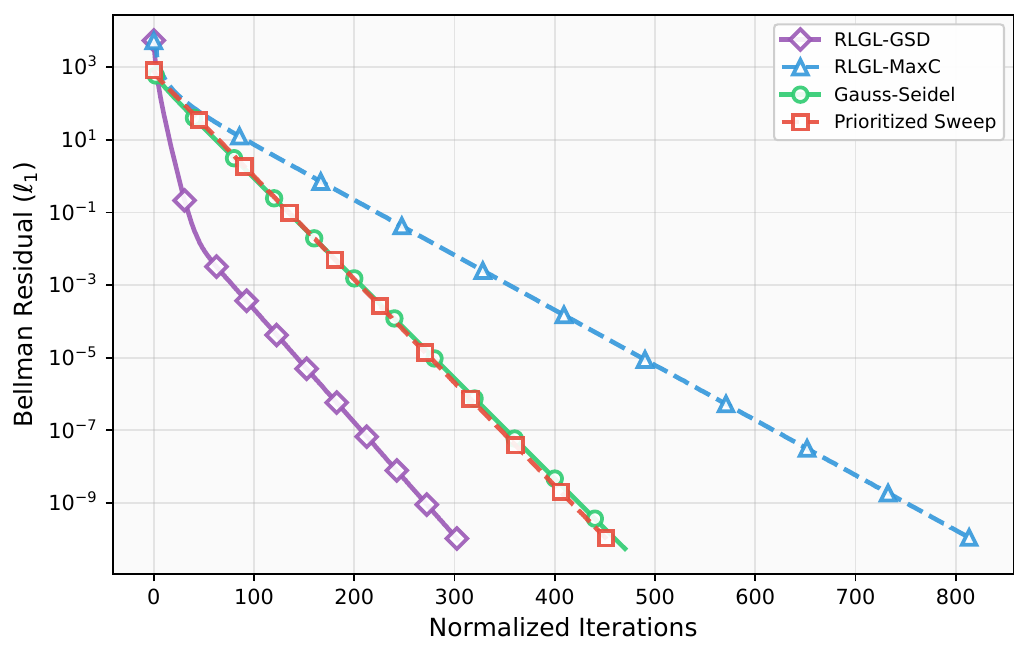}
    \caption{PA -- Bellman residual vs normalized iterations.}
    \label{fig:pa_res_time}
  \end{subfigure}

  \vspace{1ex}
  \begin{subfigure}[t]{0.48\linewidth}
    \centering
    \includegraphics[width=\linewidth]{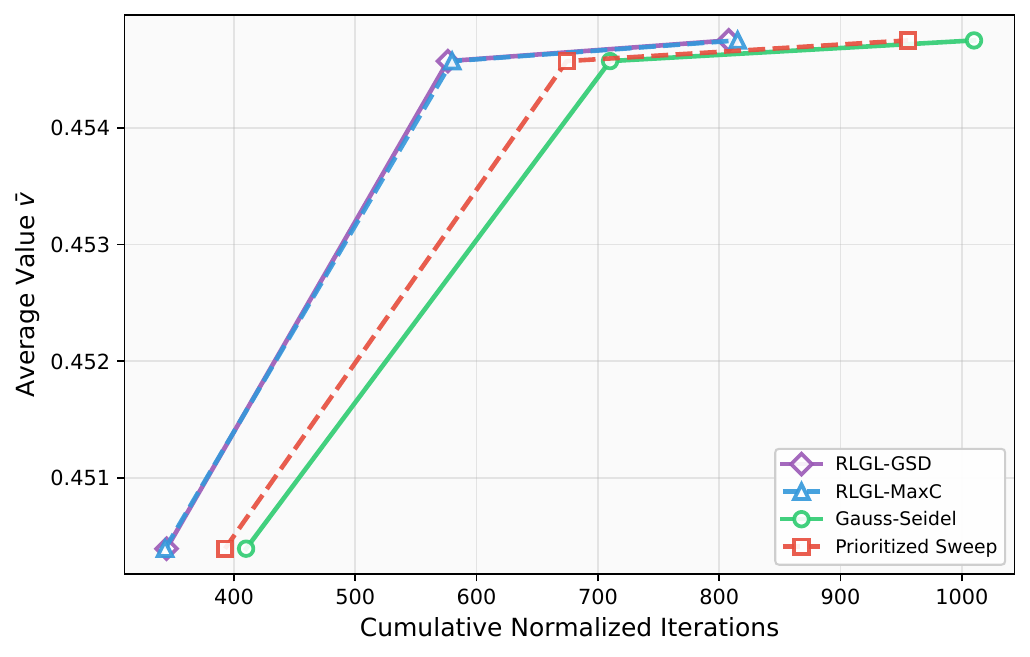}
    \caption{2D grid -- average value vs normalized iterations.}
    \label{fig:grid_value_normiter}
  \end{subfigure}\hfill
  \begin{subfigure}[t]{0.48\linewidth}
    \centering
    \includegraphics[width=\linewidth]{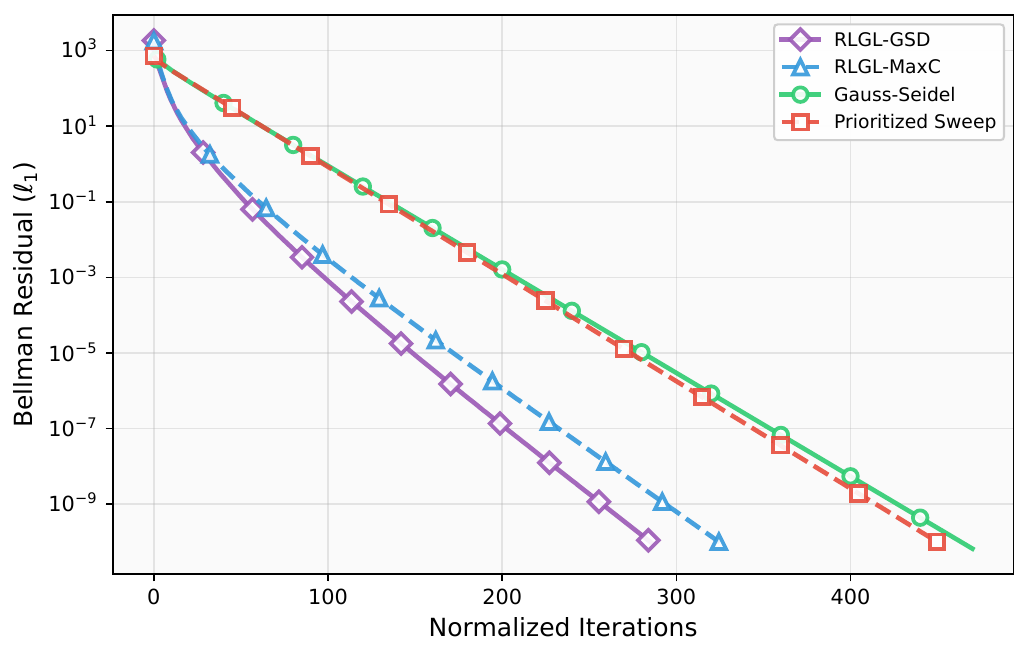}
    \caption{2D grid -- Bellman residual vs normalized iterations.}
    \label{fig:grid_res_time}
  \end{subfigure}

  \vspace{1ex}
  \begin{subfigure}[t]{0.48\linewidth}
    \centering
    \includegraphics[width=\linewidth]{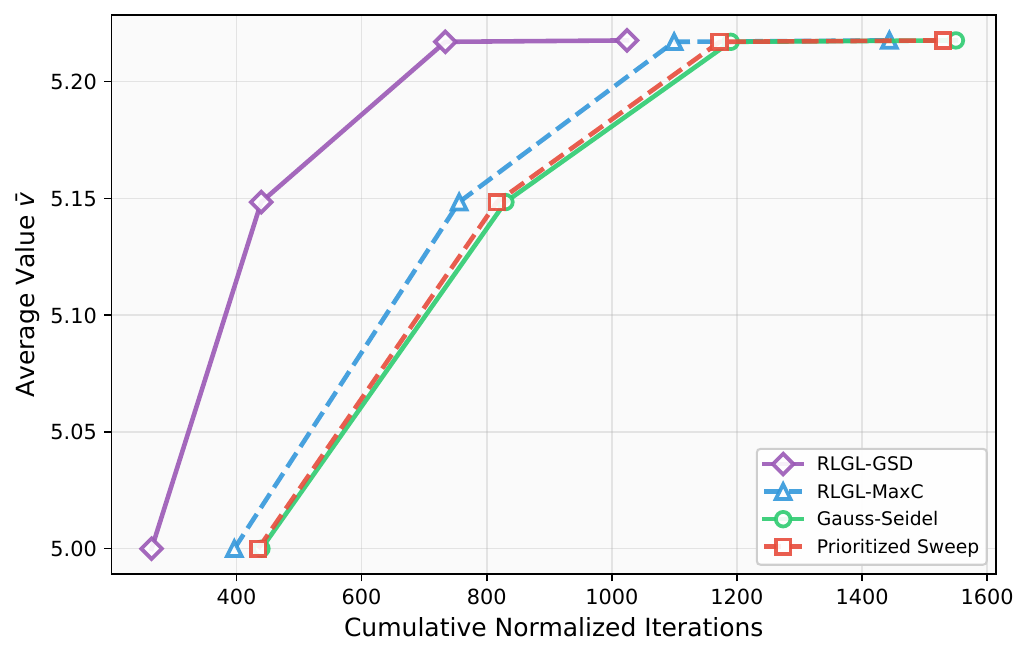}
    \caption{ER -- average value vs normalized iterations.}
    \label{fig:er_value_normiter}
  \end{subfigure}\hfill
  \begin{subfigure}[t]{0.48\linewidth}
    \centering
    \includegraphics[width=\linewidth]{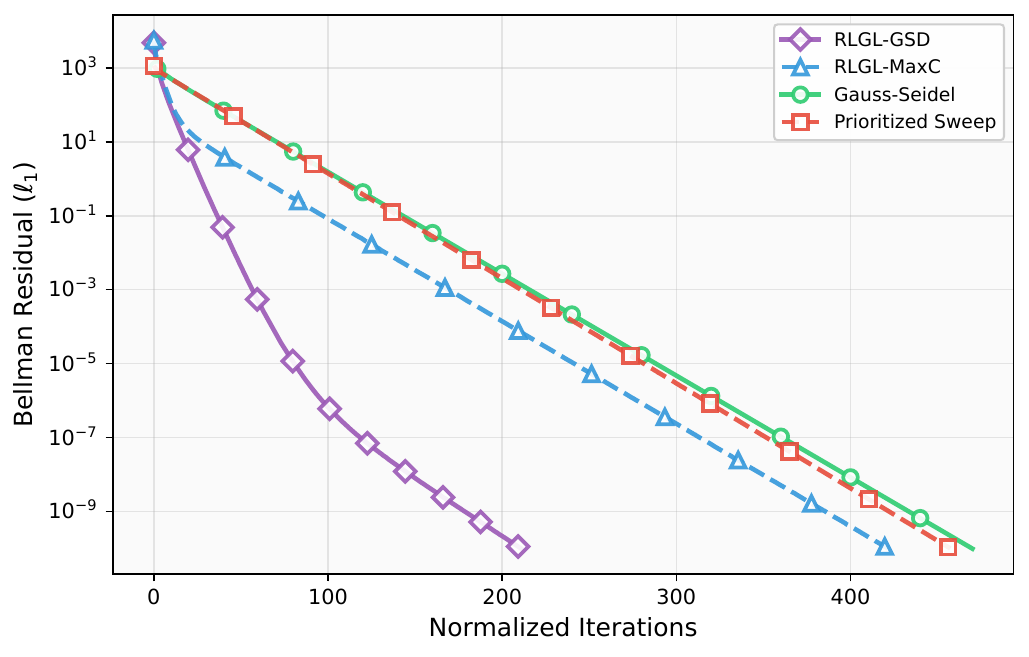}
    \caption{ER -- Bellman residual vs normalized iterations.}
    \label{fig:er_res_time}
  \end{subfigure}

  \caption{Policy Iteration (left column) / Policy Evaluation (right column) experiments (all runs: $\gamma=0.90$, $\kappa=0.1$; initial stickiness $\alpha_x\sim\mathrm{Unif}[0.1,0.9]$; results averaged over 20 seeds). Left column: average value vs normalized iterations (cumulative edges processed / total edges). Right column: Bellman residual vs normalized iterations. Stopping tolerance $10^{-10}$.}
  \label{fig:experiments_all}
\end{figure}

\section{Conclusion}
\label{sec:conclusion}
We showed that policy evaluation admits a decomposition into PageRank computations, one for each communicating class of the induced Markov chain. In this representation, the discount factor plays the role of the teleportation parameter, while the rewards determine the restart distribution. Beyond its theoretical interest, this viewpoint has practical implications, because a broad class of algorithms developed for PageRank--including Monte Carlo and coordinate-based methods--can be directly repurposed as Bellman solvers, and may introduce new approaches to policy improvement.
We illustrated this connection in a reversible setting, where policy evaluation reduces to a single PageRank computation. 

As an immediate next step we want to investigate whether similar gains persist in the general irreversible case. There are many other interesting research questions:
can modern reinforcement learning algorithms learn or approximate the time-reversed dynamics to guide exploration and policy improvement? 
Can PageRank-inspired coordinate methods be integrated into existing value-based or policy-gradient frameworks? Finally, it is natural to ask whether this perspective can yield tangible improvements in large-scale or deep reinforcement learning, where policy evaluation remains a computational bottleneck.

\newpage
\printbibliography

\end{document}